\author{Elif Uyan\i k}
\address{Elif Uyan\i k, Department of Mathematics \\ Middle East Technical University \\ 06800 Ankara Turkey}
\email{euyanik@metu.edu.tr}
\author{Murat H. Yurdakul}
\address{Murat H. Yurdakul, Department of Mathematics \\ Middle East Technical University \\ 06800 Ankara Turkey}
\email{myur@metu.edu.tr}
\title[]{A Remark on a paper of P. B. Djakov and M. S. Ramanujan}
\subjclass[2010]{46A45}
\keywords{bounded operators, unbounded operators, $\ell$-K\"{o}the spaces}
\thanks{This research was partially supported by Turkish Scientific and Technological Research Council.}
\numberwithin{equation}{section}
\theoremstyle{thmit} 
\newtheorem{theorem}{Theorem}[section]
\newtheorem{corollary}[theorem]{Corollary}
\newtheorem{proposition}[theorem]{Proposition}
\begin{document}

\maketitle

\begin{abstract}
Let $\ell$ be a Banach sequence space with a monotone norm in which the canonical system $(e_{n})$ is an unconditional basis. We show that if there exists a continuous linear unbounded operator between $\ell$-K\"{o}the spaces, then there exists a continuous unbounded quasi-diagonal operator between them. Using this result, we study in terms of corresponding K\"{o}the matrices when every continuous linear operator between $\ell$-K\"{o}the spaces is bounded. As an application, we observe that the existence of an unbounded operator between $\ell$-K\"{o}the spaces, under a splitting condition, causes the existence of a common basic subspace.
\end{abstract}

\maketitle
\section{Introduction}

     Following \cite{Dra83}, we denote by $\ell$ a Banach sequence space in which the canonical system $(e_{n})$ is an unconditional basis. The norm $\left\|.\right\|$ is called monotone if $\left\|x\right\| \leq \left\|y\right\|$ whenever $\left|x_{n}\right| \leq \left|y_{n}\right|$, $x=(x_{n})$, $y=(y_{n}) \in \ell$, $n \in \mathbb{N}$. Let $\Lambda$ be the class of such spaces with monotone norm. In particular, $l_{p} \in \Lambda$ and $c_{0} \in \Lambda$. It is known that every Banach space with an unconditional basis $(e_{n})$ has a monotone norm which is equivalent to its original norm. Indeed, it is enough to put
		$$ 
		\left\|x\right\| = \sup_{\left|\beta_{n}\right| \leq 1} \left|\sum_{n} e_{n}{'}({x}) \beta_{n} e_{n}\right|
		$$
		where $\left|.\right|$ denotes the original norm, $(e_{n}{'})$ denote the sequence of coefficient functionals.
		
		Let $\ell \in \Lambda$ and $\left\|.\right\|$ be a monotone norm in $\ell$. If $A = (a_{n}^{k})$ is a K\"{o}the matrix, the $\ell$-K\"{o}the space $\lambda^{\ell}(A)$ is the space of all sequences of scalars $(x_{n})$ such that $(x_{n}a_{n}^{k}) \in \ell$ with the topology generated by the seminorms
		$$
		\left\|(x_{n})\right\|_{k} = \left\|(x_{n} a_{n}^{k})\right\|
		$$
		
		For any linear operator $T: X \longrightarrow Y$ between Fr{\'e}chet spaces we consider the following operator seminorms
		$$
		\left\|T\right\|_{p,q} = \sup \left\{ \left\|Tx\right\|_{p} : \left\|x\right\|_{q} \leq 1\right\}, \quad p,q \in \mathbb{N}
		$$
which may take the value $+\infty$. In particular, for any one dimensional operator $ T = u \otimes x$, we have
		$$
		\left\|T\right\|_{p,q} = \left\|u\right\|_{q}^{*} \left\|x\right\|_{p}
		$$
		
		The operator $T$ is continuous if and only if for all $k$ there is $N(k)$ such that
		$$
		\left\|T\right\|_{k,N(k)} < \infty,
		$$
		$T$ is bounded if and only if there is $N \in \mathbb{N}$ such that for all $r \in \mathbb{N}$,
		$$
		\left\|T\right\|_{r,N} < \infty.
		$$
		
		We write $(X,Y) \in \mathcal{B}$ if every continuous linear operator on $X$ to $Y$ is bounded. Zahariuta \cite{Zah73} obtained that if the matrices $A$ and $B$ satisfy the conditions $d_{2}$ and $d_{1}$, repectively, then $(\lambda^{l_{1}}(A),\lambda^{l_{1}}(B)) \in \mathcal{B}$. This phenomenon was studied extensively by Vogt \cite{Vog83} not only for K\"{o}the spaces but also for the general case of Fr{\'e}chet spaces. In case of $\ell$-K\"{o}the spaces, there is no characterization of pairs $(X,Y)$ with the property $\mathcal{B}.$ 
		
		For Fr{\'e}chet spaces $X$ and $Y$, in \cite{Vog83}, Vogt proved that $(X,Y) \in \mathcal{B}$ if and only if for every sequence $N(k)$, $\exists N \in \mathbb{N}$ such that $\forall r \in \mathbb{N}$ we have $k_{0} \in \mathbb{N}$ and $C>0$ with
\begin{eqnarray}
\displaystyle {\left\| T \right\|}_{r,N} \leq C \max_{1 \leq k \leq k_{0}} {\left\| T \right\|}_{k,N(k)}
\label{eqn}
\end{eqnarray}
for all $T \in \mathcal{L}(X,Y)$.
		
		An operator $T : \lambda^{\ell}(A) \longrightarrow \lambda^{\ell}(B)$ is called quasi-diagonal if there exists $k: \mathbb{N} \longrightarrow \mathbb{N}$ and constants $m_{n}$ such that 
		$$
		Te_{n} = m_{n} \tilde{e}_{k(n)}, \quad n \in \mathbb{N}
		$$
		
		Following \cite{Kro85}, a pair of K\"{o}the spaces $(\lambda^{\ell}(B),\lambda^{\ell}(A))$ satisfies the condition $\mathcal{S}$ if,
\begin{equation}\label{eqn2}
		\forall p \quad \exists q,k \quad \forall s,l \quad \exists r,C : \frac{b_{m}^{s}}{a_{n}^{k}} \leq C \max \left\{\frac{b_{m}^{q}}{a_{n}^{p}},\frac{b_{m}^{r}}{a_{n}^{l}}\right\}
\end{equation}

		In \cite{Dra72} it was proved that the existence of an unbounded continuous linear operator from nuclear $l_{1}$-K\"{o}the space to another implies the existence of a continuous unbounded quasi-diagonal operator. Also, if the both K\"{o}the spaces are nuclear, in \cite{Nur84}, Nurlu and Terzio\u{g}lu proved that the existence of an unbounded continuous linear operator on $\lambda^{l_{1}}(A)$ to $\lambda^{l_{1}}(B)$ implies, under some conditions, the existence of a common basic subspaces of $\lambda^{l_{1}}(A)$ and $\lambda^{l_{1}}(B)$. Djakov and Ramanujan generalized these results by omitting nuclearity condition \cite{Dja02}.
		
		Let $X = \lambda^{\ell}(A)$ and $Y = \lambda^{\ell}(B)$ be the $\ell$- K\"{o}the spaces. Here, we modify Proposition $1$ in \cite{Dja02} for $\ell$- K\"{o}the spaces and using it we obtain a necessary and sufficient condition in terms of corresponding K\"{o}the matrices when $(X,Y) \in \mathcal{B}$. Also we observe a common basic subspace between $\ell$- K\"{o}the spaces $X$ and $Y$ when $(X,Y) \notin \mathcal{B}$ and $(Y,X) \in \mathcal{S}$ following the same lines in \cite{Dja02}.

\section{Bounded and unbounded operators in $\ell$-K\"{o}the spaces}

    Let $\lambda^{\ell}(A), \lambda^{\ell}(B)$ be $\ell$-K\"{o}the spaces. As in \cite{Dja02} we obtain the following.
\begin{proposition}
Let $\lambda^{\ell}(A)$ and $\lambda^{\ell}(B)$ be $\ell$- K\"{o}the spaces. If there exists a continuous linear unbounded operator $T: \lambda^{\ell}(A) \longrightarrow \lambda^{\ell}(B)$, then there exists a continuous unbounded quasi-diagonal operator on $\lambda^{\ell}(A)$ to $\lambda^{\ell}(B).$
\label{pro}
\end{proposition}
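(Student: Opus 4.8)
The plan is to start from a continuous unbounded operator $T:\lambda^{\ell}(A)\to\lambda^{\ell}(B)$ and extract from its matrix a single "bad" entry in each of infinitely many rows and columns, then assemble these into a quasi-diagonal operator. Writing $Te_{n}=\sum_{m}t_{mn}\tilde{e}_{m}$, the continuity of $T$ gives, for every $k$, an index $N(k)$ with $\|T\|_{k,N(k)}<\infty$, while unboundedness means that for every $N$ there is some $r=r(N)$ with $\|T\|_{r,N}=\infty$. The first step is to translate these operator-seminorm conditions into quantitative statements about the entries $t_{mn}$ against the ratios $b_{m}^{k}/a_{n}^{N(k)}$, using the elementary identity $\|u\otimes x\|_{p,q}=\|u\|_{q}^{*}\|x\|_{p}$ for rank-one operators together with the monotonicity of the norm on $\ell$ (which lets one compare the size of a single matrix entry with the full seminorm).

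Next I would run a diagonalization / gliding-hump argument. Because $T$ is unbounded, one can choose an increasing sequence of levels and, at each stage, locate a pair $(m_{j},n_{j})$ (with the $n_{j}$ chosen distinct, so that one reads genuinely different basis vectors in the domain) for which the single entry $t_{m_{j}n_{j}}$ is large relative to the controlling ratio at the current level but still controlled at all lower levels by continuity. The monotone, unconditional structure of $\ell$ is exactly what guarantees that keeping one entry per column and discarding the rest only decreases each seminorm, so that the resulting operator is still continuous; simultaneously, because the retained entries were the ones witnessing unboundedness, the diagonalized operator inherits unboundedness. Concretely, I would define $S e_{n_{j}} = t_{m_{j}n_{j}}\,\tilde{e}_{m_{j}}$ and $S e_{n}=0$ otherwise, i.e. $m_{j}=k(n_{j})$ and coefficients $t_{m_{j}n_{j}}$, which is quasi-diagonal by construction.

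The verification then splits into two checks. For continuity of $S$ I would fix $k$ and bound $\|S\|_{k,N(k)}$ by $\sup_{j}|t_{m_{j}n_{j}}|\,b_{m_{j}}^{k}/a_{n_{j}}^{N(k)}$, and show this supremum is finite because each retained entry was chosen to respect the continuity estimate $\|T\|_{k,N(k)}<\infty$ at that level; here monotonicity of the $\ell$-norm is used to pass from the diagonal operator's seminorm to a supremum of scalar ratios. For unboundedness of $S$ I would argue by contradiction: if $S$ were bounded there would be a fixed $N$ making all $\|S\|_{r,N}$ finite, forcing the ratios $b_{m_{j}}^{r}/a_{n_{j}}^{N}$ to stay bounded along the selected sequence, contradicting the defining property that at infinitely many stages the entry $t_{m_{j}n_{j}}$ was chosen to overwhelm the level-$N$ ratio.

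The main obstacle I anticipate is the selection step: one must choose the pairs $(m_{j},n_{j})$ so that a single surviving entry per column is simultaneously \emph{continuous at every lower level} and \emph{unbounded at some level above the current bound}, and this has to be arranged with the column indices $n_{j}$ pairwise distinct so that $S$ is a legitimate quasi-diagonal operator rather than a degenerate one. Managing the interplay between the two quantifier structures (the "for all $k$ exists $N(k)$" of continuity versus the "for all $N$ exists $r$" of unboundedness) through a careful diagonal enumeration, while invoking the monotonicity and unconditionality of $\ell$ at each comparison to legitimately truncate the matrix down to its diagonal, is where the real work lies; the two seminorm computations afterwards should be essentially routine once the selection is correctly set up.
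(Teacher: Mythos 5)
Your plan hinges on the selection step: you want pairs $(m_j,n_j)$ for which the \emph{single matrix entry} $t_{m_jn_j}$ of $T$ is large relative to the ratios $b_{m_j}^r/a_{n_j}^N$, and your operator $Se_{n_j}=t_{m_jn_j}\tilde{e}_{m_j}$ inherits unboundedness from these entries. This step cannot be carried out in general, because unboundedness of $T$ is not an entrywise (or even columnwise) phenomenon. For any $\ell\in\Lambda$, monotonicity gives only the one-sided estimate $|t_{mn}|\,b_m^r/a_n^N\leq\|T\|_{r,N}$: entries furnish \emph{lower} bounds for the operator seminorms, so $\|T\|_{r,N}=\infty$ produces no large entry at all; the mass responsible for unboundedness may be spread over many coordinates. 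Concretely, take $\ell=\ell_2$, let $a_n^k=b_n^k=e^{ks}$ for $n$ in consecutive blocks $I_s$ of size $d_s=\lceil e^{2s^2}\rceil$, and let $T$ act on the $s$-th block as $d_s^{-1}J_s$, where $J_s$ is the all-ones $d_s\times d_s$ matrix. Then $\|T\|_{k,k}=1$ for every $k$ (so $T$ is continuous), $\|T\|_{N+1,N}=\sup_s e^{s}=\infty$ for every $N$ (so $T$ is unbounded), yet every entry satisfies $|t_{mn}|\,b_m^r/a_n^N\leq e^{(r-N)s-2s^2}\rightarrow 0$, so \emph{every} quasi-diagonal operator whose coefficients are entries of $T$ is bounded (the choice $N=1$ works for all $r$). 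The Proposition is still true for this pair (send one basis vector per block to itself), but no proof that keeps $T$'s entries as coefficients can find such an operator. The same objection defeats your opening claim that both continuity and unboundedness can be "translated into quantitative statements about the entries": only continuity can.

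The paper's proof avoids exactly this trap: it uses $T$ only to \emph{locate} indices, never its entries as coefficients. After normalizing so that $\|Tx\|_k\leq 2^{-k}\|x\|_k$ and (via multipliers and a reindexing of seminorms, a reduction about columns that your sketch also does not supply) $\sup_n\|Te_n\|_{k+1}/\|e_n\|_k=\infty$, one shows by monotonicity that the weighted sequences $\bigl(\theta_{nv}\sup_k(b_v^k/a_n^k)\bigr)_v$ have $\ell$-norm at most $1$, while $\bigl(\theta_{n_jv}\,b_v^{k_j+1}/a_{n_j}^{k_j}\bigr)_v$ has norm at least $2^j$; comparing the two coordinatewise yields a coordinate $v_j$ where the \emph{K\"othe matrices themselves} are disparate: $t_j:=\sup_k b_{v_j}^k/a_{n_j}^k\leq 2^{-j}\,b_{v_j}^{k_j+1}/a_{n_j}^{k_j}$. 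The quasi-diagonal operator is then built with coefficient $t_j^{-1}$, a quantity determined by $A$ and $B$ alone and typically unrelated to the (possibly minuscule) entry $\theta_{n_jv_j}$; this calibration makes continuity trivial ($t_j^{-1}b_{v_j}^k\leq a_{n_j}^k$ for all $k$, hence $\|Dx\|_k\leq\|x\|_k$) and unboundedness immediate (since each $k$ recurs infinitely often among the $k_j$). A further, smaller error in your write-up: discarding all but one entry per column does \emph{not} "only decrease each seminorm" as an operator statement, since cancellation in $Tx$ can make $\|Sx\|_k>\|Tx\|_k$; what monotonicity gives is the columnwise bound $\|Se_n\|_k\leq\|Te_n\|_k$, which is enough to repair your continuity check but does nothing for unboundedness. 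The missing idea is precisely the paper's switch from entries of $T$ to the matrix-ratio coefficients $t_j^{-1}$.
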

\begin{proof}
Let $T: \lambda^{\ell}(A) \longrightarrow \lambda^{\ell}(B)$ be continuous and unbounded. We may assume without loss of generality that
$$
\left\|Tx\right\|_{k} \leq \frac{1}{2^{k}} \left\|x\right\|_{k}, \quad \forall x \in \lambda^{\ell}(A)
$$
$$
\sup_{n} \frac{\left\| Te_{n} \right\|_{k+1}}{\left\|e_{n}\right\|_{k}} = \infty, \quad k \in \mathbb{N}.
$$
Indeed, one may obtain these by using appropriate multipliers and passing to a subsequence of seminorms, if necessary. Let $(k_{j})$ be a sequence of integers such that each $k \in \mathbb{N}$ appears in it infinitely many times and choose an increasing subsequence $(n_{j})$ such that
$$
\frac{ \|Te_{n_{j}}\|_{k_{j}+1}}{\|e_{n_{j}}\|_{k_{j}}} \geq 2^{j}, \quad \forall j
$$
Let us remind that $\left\|\tilde{e_{v}}\right\|_{k} = b_{v}^{k}$ and $\left\|e_{n}\right\|_{k} = a_{n}^{k}$ and let $\displaystyle Te_{n} = \sum_{v} \theta_{nv} \tilde{e_{v}}.$ Note that,
\begin{align*}
\sup_{\left|\alpha_{v}\right| \leq 1} \left| \sum_{v} \theta_{nv} \alpha_{v} \left(\sup_{k} \frac{b_{v}^{k}}{a_{n}^{k}}\right) \tilde{e_{v}} \right|
&\leq \sum_{k} \left(\frac{b_{v}^{k}}{a_{n}^{k}}\right) \left( \sup_{\left|\alpha_{v}\right| \leq 1} \left| \sum_{v} \theta_{nv} \alpha_{v}  \tilde{e_{v}} \right| \right) \\
&\leq \sum_{k} \frac{1}{{a_{n}^{k}}} \sup_{\left|\alpha_{v}\right| \leq 1} \left| \sum_{v} \theta_{nv} \alpha_{v} {b_{v}^{k}} \tilde{e_{v}} \right| \\
&\leq \sum_{k} \frac{\left\| Te_{n} \right\|_{k}}{\left\|e_{n}\right\|_{k}} \leq \sum_{k} \frac{1}{2^{k}} \leq 1
\end{align*}
Therefore we obtain that
\begin{equation}\label{eqn1}
\sup_{\left|\alpha_{v}\right| \leq 1} \left| \sum_{v} \theta_{n_{j}v} \alpha_{v} \left(\sup_{k} \frac{b_{v}^{k}}{a_{n_{j}}^{k}}\right) \tilde{e_{v}} \right| \leq 1 \leq \frac{1}{2^{j}} \sup_{\left|\alpha_{v}\right| \leq 1} \left| \sum_{v} \theta_{n_{j}v} \alpha_{v} \frac{b_{v}^{k_{j}+1}}{a_{n_{j}}^{k_{j}}} \tilde{e_{v}}\right|
\end{equation}
So there is a $v_{j}$ such that
$$
t_{j}:= \sup_{k} \frac{b_{v_{j}}^{k}}{a_{n_{j}}^{k}} \leq \frac{1}{2^{j}} \frac{b_{v}^{k_{j}+1}}{a_{n_{j}}^{k_{j}}}
$$
Otherwise we obtain a contradiction to \eqref{eqn1} by monotonicity of $\left\|.\right\|.$

    Now, consider the quasi-diagonal operator $D: \lambda^{\ell}(A) \longrightarrow \lambda^{\ell}(B)$ defined by
$$
De_{n_{j}} = t_{j}^{-1} \tilde{e_{v_{j}}} , \quad j \in \mathbb{N}
$$
$$
De_{n} = 0 \quad \text{if} \quad n \neq n_{j}
$$
Let $\displaystyle x = \sum_{j} x_{n_{j}} e_{n_{j}} \in \lambda^{\ell} (A).$ So, $\displaystyle Dx = \sum_{j} x_{n_{j}} t_{j}^{-1} \tilde{e}_{v_{j}}$.
Since $\left|x_{n_{j}} t_{j}^{-1} b_{v_{j}}^{k}\right| \leq \left|x_{n_{j}} a_{n_{j}}^{k}\right|$, by monotonicity we obtain that $\left\|\left(x_{n_{j}} t_{j}^{-1} b_{v_{j}}^{k}\right)\right\| \leq \left\|\left(x_{n_{j}} a_{n_{j}}^{k}\right)\right\|,$ i.e.,
$$ 
\left\|Dx\right\|_{k} \leq \left\|x\right\|_{k} \quad \forall k
$$
Hence, $D$ is continuous.
      
			Similarly, it is easy to see that $D$ is unbounded since for a fixed $k$, there is a subsequence $(j_{m})$ such that $k_{j_{m}} = k$, $m\in \mathbb{N}$ and 
$$
\frac{\|De_{n_{j_{m}}}\|_{k+1}}{\|e_{n_{j_{m}}}\|_{k}} \geq 2^{j_{m}} \rightarrow \infty
$$
as $m \rightarrow \infty.$ This completes the proof.
\end{proof}
Proposition \ref{pro} enables us to prove the sufficiency part of the following theorem. Notice that sufficiency can not be obtained directly for a general linear map.
\begin{theorem}
Let $\lambda^{\ell}(A)$ and $\lambda^{\ell}(B)$ be $\ell$-K\"{o}the spaces. $(\lambda^{\ell}(A),\lambda^{\ell}(B)) \in \mathcal{B}$ if and only if for every sequence $N(k)\uparrow\infty$ there exists $N \in \mathbb{N}$ such that for each $r \in \mathbb{N}$ we have $k_{o} \in \mathbb{N}$ and $C > 0$ with
$$
\frac{b_{v}^{r}}{a_{i}^{N}} \leq C \max_{1 \leq k \leq k_{0}} \frac{b_{v}^{k}}{a_{i}^{N(k)}}
$$
for all $v \in \mathbb{N}, i \in \mathbb{N}$.
\end{theorem}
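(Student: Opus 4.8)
The plan is to derive both implications from Vogt's criterion \eqref{eqn}, using the rank-one operators $T_{i,v}=e_i'\otimes\tilde e_v$ (which send $e_i\mapsto\tilde e_v$ and $e_j\mapsto 0$ for $j\neq i$) as the bridge between the scalar matrix inequality and the operator inequality in \eqref{eqn}. Since $\|\tilde e_v\|_p=b_v^p$, and since monotonicity of the norm together with the normalization $\|e_i\|_\ell=1$ (implicit in $\|e_n\|_k=a_n^k$) gives $\|e_i'\|_q^*=1/a_i^q$, the one-dimensional formula yields $\|T_{i,v}\|_{p,q}=\|e_i'\|_q^*\,\|\tilde e_v\|_p=b_v^p/a_i^q$. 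Thus each matrix quantity $b_v^r/a_i^N$ is literally an operator seminorm of a concrete continuous operator, and the two maxima in the theorem and in \eqref{eqn} correspond term by term.

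For necessity, assume $(\lambda^\ell(A),\lambda^\ell(B))\in\mathcal B$. Then \eqref{eqn} holds for every continuous $T$. Fix an arbitrary increasing $N(k)\uparrow\infty$, take the $N$ provided by \eqref{eqn}, and for each $r$ the associated $k_0,C$. Applying \eqref{eqn} to the operators $T_{i,v}$ and substituting $\|T_{i,v}\|_{p,q}=b_v^p/a_i^q$ turns the operator inequality verbatim into $b_v^r/a_i^N\le C\max_{1\le k\le k_0}b_v^k/a_i^{N(k)}$, with the same $N$, the same $r$-dependent $k_0,C$, and uniformly in $i,v$; this is exactly the asserted matrix condition.

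For sufficiency I would use Proposition \ref{pro} through its contrapositive: to prove $(\lambda^\ell(A),\lambda^\ell(B))\in\mathcal B$ it suffices to show that \emph{every} continuous quasi-diagonal operator is bounded. So let $D$ be continuous with $De_n=m_n\tilde e_{k(n)}$. Continuity gives, for each $k$, some index with $\|D\|_{k,\cdot}<\infty$, and by replacing it with a larger index (the seminorms grow with their index) one obtains an increasing sequence $N(k)\uparrow\infty$ with $\|D\|_{k,N(k)}<\infty$ for all $k$. Feeding this $N(\cdot)$ into the matrix condition produces an $N$ and, for each $r$, scalars $k_0,C$ with $b_{k(n)}^r/a_n^N\le C\max_{1\le l\le k_0}b_{k(n)}^l/a_n^{N(l)}$ for all $n$.

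The transfer step is the heart of the argument and the place I expect the main obstacle. For $\|x\|_N\le 1$ put $y_n=x_na_n^N$, so that $\|Dx\|_r=\big\|\sum_n (m_nb_{k(n)}^r/a_n^N)\,y_n\,e_{k(n)}\big\|_\ell$. I would apply the matrix inequality coordinatewise, then bound $\max_{1\le l\le k_0}$ by the sum $\sum_{1\le l\le k_0}$ and invoke monotonicity of the $\ell$-norm at the level of the output coordinates $k(n)$, obtaining $\|Dx\|_r\le C\sum_{l=1}^{k_0}\big\|\sum_n (m_nb_{k(n)}^l/a_n^{N(l)})\,|y_n|\,e_{k(n)}\big\|_\ell$; since $|y_n|$ is an admissible input of $\|\cdot\|_N$-norm at most $1$, each summand is $\le\|D\|_{l,N(l)}$. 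Hence
\[
\|D\|_{r,N}\le C\,k_0\max_{1\le l\le k_0}\|D\|_{l,N(l)}<\infty
\]
for every $r$ with the single fixed $N$, so $D$ is bounded. The obstacle is precisely this transfer: because $\ell$ is only assumed monotone (not symmetric or rearrangement invariant) and $k(\cdot)$ may identify distinct indices, one cannot simply equate $\|D\|_{r,N}$ with $\sup_n m_nb_{k(n)}^r/a_n^N$, as a naive reading of the diagonal structure would suggest. Replacing the maximum by a sum and exploiting coordinatewise monotonicity together with the substitution $|y_n|$ sidesteps the non-symmetry and the possible collisions, at the harmless cost of the constant factor $k_0$; this is also why, as the paper notes, sufficiency cannot be read off directly for a general linear map and must be routed through the quasi-diagonal reduction of Proposition \ref{pro}.
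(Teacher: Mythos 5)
Your proof is correct and follows the same skeleton as the paper's: necessity by applying Vogt's criterion \eqref{eqn} to the rank-one operators $e_i'\otimes\tilde e_v$, and sufficiency by reducing to quasi-diagonal operators via Proposition \ref{pro} and then feeding the continuity indices $N(k)$ into the matrix condition. Where you diverge is the transfer step, and your instinct there is sound: the paper simply concludes $\left\|T\right\|_{r,N}<\infty$ from the uniform bound $\sup_i |t_i|\,b_{z(i)}^r/a_i^N<\infty$, which amounts to identifying the operator seminorm of a quasi-diagonal operator with the supremum of its diagonal ratios. That identification is automatic when $\ell$ is symmetric (e.g.\ $\ell_p$, $c_0$) and $z$ is injective --- the original Djakov--Ramanujan setting --- but for a general monotone $\ell$ the input coordinates sit at positions $i$ while the output coordinates sit at positions $z(i)$, and monotonicity only compares sequences coordinate by coordinate at the \emph{same} positions; collisions of $z$ add a further complication. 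Your max-to-sum argument, which bounds the weighted output sequence of $Dx$ coordinatewise by $C\sum_{l\le k_0}$ of weighted outputs of $D$ at inputs built from $(|y_n|)$, applies monotonicity only at matching target coordinates, and each summand reassembles into $\left\|D\right\|_{l,N(l)}$; so it genuinely closes this gap at the harmless cost of the factor $k_0$. One small repair is needed: the modified inputs $\sum_n (|y_n|/a_n^{N(l)})\,e_n$ need not belong to $\lambda^{\ell}(A)$ when $x$ has infinite support (their seminorms of index larger than $N(l)$ may be infinite), so you should run the estimate for finitely supported $x$ first and then pass to the limit, using that the partial sums satisfy $\left\|x^{(M)}\right\|_N\le\left\|x\right\|_N$ by monotonicity and that $D$ is continuous. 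With that routine adjustment your argument is complete, and it in fact supplies a justification for a step that the paper asserts without proof.
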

\begin{proof}
Suppose $(\lambda^{\ell}(A),\lambda^{\ell}(B)) \in \mathcal{B}$. Consider $T: \lambda^{\ell}(A)\longrightarrow \lambda^{\ell}(B)$ with $T = e_{i}' \otimes e_{v}$ where $e_{i}{'}(x) = x_{i}$ for all $x \in \lambda^{\ell}(A).$
    
		Since $T$ is the operator of rank one, we note that
$$
\left\|T\right\|_{k,N(k)} = \left\|e_{i}{'}\right\|_{N(k)} \left\|e_{v}\right\|_{k} = \frac{b_{v}^{k}}{a_{j}^{N(k)}}
$$
Similarly $\left\|T\right\|_{r,N} = \frac{b_{v}^{r}}{a_{i}^{N}}.$ The result follows from \eqref{eqn}.

    Conversely we want to show that every continuous linear quasi-diagonal operator is bounded. Let $T: \lambda^{\ell}(A)\longrightarrow \lambda^{\ell}(B)$ be a continuous  quasi-diagonal operator defined by $T(e_{i}) = t_{i} \tilde{e}_{z(i)}$. By continuity, $\exists N(k)$ such that
$$ 
\sup_{i} \frac{\left\|Te_{i}\right\|_{k}}{\left\|e_{i}\right\|_{N(k)}} = \sup_{i} \frac{\left|t_{i}\right| b_{z(i)}^{k}}{a_{i}^{N(k)}} = C(k) < \infty.
$$
Thus for this $N(k)$, $\exists N \in \mathbb{N}$ such that $\forall r \in \mathbb{N}$ we have $k_{o} \in \mathbb{N}$ and $C > 0$ with
$$
\frac{\left|t_{i}\right| b_{z(i)}^{r}}{a_{i}^{N}} \leq C \max_{1 \leq k \leq k_{0}} \frac{\left|t_{i}\right| b_{z(i)}^{k}}{a_{i}^{N(k)}} \leq C \max_{1 \leq k \leq k_{0}} C(k).
$$
Hence $\left\|T\right\|_{r,N} < \infty$, i.e., $T$ is bounded. In view of Proposition \ref{pro}, we obtain the result.	
\end{proof}

    $\lambda^{\ell}(A)$ and $\lambda^{\ell}(B)$ have a common basic subspace if there is a quasi-diagonal operator $T: X \rightarrow Y$ such that the restriction of $T$ to some infinite dimensional basic subspace of $X$ is an isomorphism. We observe the following extension of Proposition $3$ in \cite{Dja02} to the $\ell$-K\"{o}the space case. The proof is the same as in \cite{Dja02}.
\begin{corollary}
If $(\lambda^{\ell}(B),\lambda^{\ell}(A)) \in \mathcal{S}$ and there exists a continuous unbounded operator $T : \lambda^{\ell}(A) \longrightarrow \lambda^{\ell}(B)$, then $\lambda^{\ell}(A)$ and $\lambda^{\ell}(B)$ have a common basic subspace.
\end{corollary}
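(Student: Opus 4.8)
The plan is to reduce the existence of a common basic subspace to finding an infinite index set on which the quasi-diagonal operator supplied by Proposition~\ref{pro} is bounded below, and then to manufacture that index set from condition $\mathcal{S}$ together with the growth built into the construction.

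First, since $T$ is continuous and unbounded, Proposition~\ref{pro} gives a continuous unbounded quasi-diagonal operator $D:\lambda^{\ell}(A)\to\lambda^{\ell}(B)$ with $De_{n_j}=t_j^{-1}\tilde{e}_{v_j}$ and $De_n=0$ for $n\notin\{n_j\}$, where $t_j=\sup_k b_{v_j}^{k}/a_{n_j}^{k}$ and $t_j\le 2^{-j}\,b_{v_j}^{k_j+1}/a_{n_j}^{k_j}$, the sequence $(k_j)$ meeting every integer infinitely often. Passing to a subsequence I may assume the $v_j$ are distinct (otherwise the argument only simplifies), so that $E:=\overline{\mathrm{span}}\{e_{n_j}\}$ and $F:=\overline{\mathrm{span}}\{\tilde{e}_{v_j}\}$ are genuine coordinate $\ell$-K\"othe subspaces of $\lambda^{\ell}(A)$ and $\lambda^{\ell}(B)$, carrying the matrices $(a_{n_j}^{k})_j$ and $(b_{v_j}^{k})_j$. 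The restriction sends $e_{n_j}\mapsto t_j^{-1}\tilde{e}_{v_j}$; by monotonicity of the norm and unconditionality of the basis, $D|_E$ is an isomorphism of $E$ onto $F$ precisely when the K\"othe matrices $(a_{n_j}^{k})_j$ and $(t_j^{-1}b_{v_j}^{k})_j$ are equivalent.

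One of the two inequalities is free: continuity of $D$ gives $t_j^{-1}b_{v_j}^{k}\le a_{n_j}^{k}$ for all $k,j$, so the whole matter is to select an infinite $J$ on which the reverse estimate holds, i.e. for every $P$ there are $P'$ and $C$ with $t_j a_{n_j}^{P}\le C\,b_{v_j}^{P'}$ for $j\in J$. I would first thin out to a subsequence on which $k_j\equiv k^{*}$ is constant, which is possible since every integer occurs infinitely often in $(k_j)$. For $P\le k^{*}$ the bound $t_j a_{n_j}^{P}\le 2^{-j}b_{v_j}^{k^{*}+1}\le b_{v_j}^{k^{*}+1}$ is immediate from $t_j\le 2^{-j}b_{v_j}^{k^{*}+1}/a_{n_j}^{k^{*}}$ together with $a_{n_j}^{P}\le a_{n_j}^{k^{*}}$. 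For the remaining $P>k^{*}$ the factor $a_{n_j}^{P}/a_{n_j}^{k^{*}}$ grows, and here I expect to feed in condition $\mathcal{S}$ for the pair $(\lambda^{\ell}(B),\lambda^{\ell}(A))$ with $m=v_j$, $n=n_j$: applied with $s=k^{*}+1$, its clause $\forall p\,\exists q,k\,\forall s,l\,\exists r,C$ trades the single high index $k^{*}+1$ carried by $t_j$ against lower ones, the first term $b_{v_j}^{q}/a_{n_j}^{p}$ of the maximum delivering the required bound while the second term $b_{v_j}^{r}/a_{n_j}^{l}$ is made negligible by enlarging the free parameter $l$ and exploiting the geometric gap $a_{n_j}^{k^{*}+1}\ge 2^{j}a_{n_j}^{k^{*}}$ coming from unboundedness. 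Once the reverse estimate holds on $J$, $D|_E:E\to F$ is an isomorphism and $E\cong F$ is the sought common basic subspace.

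The main obstacle is exactly this last step. The estimate produced by unboundedness lives at the moving, high seminorm index $k_j+1$, whereas an isomorphism demands a lower bound \emph{uniform} in the seminorm level, and the index $r$ and constant $C$ returned by $\mathcal{S}$ depend on the chosen $s$. Reconciling these is the crux, and it is precisely what forces the thinning to constant $k_j$ (so that only one regime of $\mathcal{S}$ is invoked for each fixed $P$, keeping $r$ and $C$ fixed) and the separation into the cases $P\le k^{*}$ and $P>k^{*}$ above; this is the sole point at which the hypothesis $(\lambda^{\ell}(B),\lambda^{\ell}(A))\in\mathcal{S}$ is used. Everything else is the routine bookkeeping of equivalence of K\"othe matrices, which is automatic once monotonicity of the norm and unconditionality of the basis are in hand.
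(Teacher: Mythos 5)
Your frame is the right one, and it is the same frame as the proof the paper points to (Proposition 3 of \cite{Dja02}): take the operator $D$ from Proposition \ref{pro}, note that $t_j^{-1}b_{v_j}^{k}\le a_{n_j}^{k}$ comes free from the definition of $t_j$, and reduce everything to finding an infinite $J$ on which the reverse estimates $t_j a_{n_j}^{P}\le C\,b_{v_j}^{P'}$ hold; your treatment of the range $P\le k^{*}$ is also correct. But the step you call the crux is a genuine gap, not residual bookkeeping, and the plan you sketch for it cannot work because it inverts the quantifier structure of condition $\mathcal{S}$. In \eqref{eqn2} the index $k$ under $a_n^{k}$ on the left-hand side is \emph{supplied by the condition after $p$ is chosen} ($\forall p\ \exists q,k\ \forall s,l\ \exists r,C$); you do not get to pick it. Your plan fixes the level $k^{*}$ first, by thinning to $k_j\equiv k^{*}$, and only then quantifies over $P$. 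After that thinning, the only lower-bound information you retain is at the single level $k^{*}$, namely $2^{j}t_j\le b_{v_j}^{k^{*}+1}/a_{n_j}^{k^{*}}$, and to feed it into \eqref{eqn2} for $p=P$ you would need the witness $k(P)$ to satisfy $k(P)\le k^{*}$, since only then is $b_{v_j}^{k^{*}+1}/a_{n_j}^{k(P)}\ge b_{v_j}^{k^{*}+1}/a_{n_j}^{k^{*}}\ge 2^{j}t_j$. Nothing in the hypothesis bounds the witnesses: in general $k(P)\to\infty$, so for all but finitely many $P$ your subsequence carries no estimate that $\mathcal{S}$ can act on, and the argument for $P>k^{*}$ collapses. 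A constant-$k^{*}$ subsequence could only suffice if $\mathcal{S}$ held with a witness $k$ uniform in $p$, which is a strictly stronger hypothesis than the one given. A second, smaller misstep is the claim that the term $b_{v_j}^{r}/a_{n_j}^{l}$ can be ``made negligible by enlarging $l$'': both $r$ and $C$ are functions of the chosen $(s,l)$, so enlarging $l$ changes them, and when that branch of the maximum dominates it cannot be discarded; one has to extract the desired inequality from it as well (which is possible when $l\ge p$, but then with the exponent $r$).

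The proof the paper invokes resolves exactly this by reversing the order of the choices, and that reversal is the idea missing from your sketch: one first applies $\mathcal{S}$ to \emph{every} $p$, obtaining witnesses $q(p),k(p)$ that may be assumed increasing in $p$, and only then builds the subsequence, diagonally, taking its $i$-th member from the infinitely many $j$ with $k_j=k(i)$ --- this is precisely where the hypothesis that every integer occurs infinitely often in $(k_j)$ is spent --- and large enough that the factor $2^{-j}$ absorbs the constants that $\mathcal{S}$ returns for the finitely many $p\le i$ in play at that stage. For each fixed $p$ the required estimate then holds for all members of the subsequence from some point on, with the finitely many earlier members absorbed into the constant, and both branches of the maximum in \eqref{eqn2} are used. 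In short: the subsequence must be adapted to the witnesses produced by $\mathcal{S}$, rather than frozen (at a single level $k^{*}$) before $\mathcal{S}$ is ever consulted.
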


\end{document}